\newtheorem{theorem}{Theorem}[section]
\newtheorem{lemma}[theorem]{Lemma}
\newtheorem{definition}[theorem]{Definition}
\newtheorem{corollary}[theorem]{Corollary}
\newtheorem{conjecture}[theorem]{Conjecture}
\newcommand{\FF}{\mathbb{F}}
\newcommand{\EE}{\mathbb{E}}
\newcommand{\PP}{\mathbb{P}}
\newcommand{\F}{\mathbb{F}_{p}^{d}}
\newcommand{\E}{\widehat{E}(\xi)}
\newcommand{\m}{|}
\newcommand{\R}{\mathcal{R}}
\newcommand{\I}{\mathcal{I}}
\newcommand{\e}{e^{-\frac{2\pi i x\cdot\xi}{p}}}
\begin{document}

\title{Salem sets in vector spaces over finite fields}

\author{Changhao Chen}

\address{School of Mathematics and Statistics, The University of New South Wales, Sydney NSW 2052, Australia }
\email{changhao.chenm@gmail.com}


\subjclass[2010]{05B25, 52C99}
\keywords{Finite fields, Salem sets}


\begin{abstract}
We prove that almost all random subsets of a finite vector space are weak Salem sets (small Fourier coefficient), which extends a result of Hayes to a different probability model. 
\end{abstract}


\maketitle

\section{Introduction}

Let $F_p$ denote the finite field with $p$ element where $p$ is prime, and $\F$ be the $d$-dimensional vector space over this field. Let $E \subset \F$. We use the same notation as in Babai \cite{Babai}, Hayes \cite{Hayes} to define that 
\begin{equation}
\Phi(E)=\max_{\xi \neq 0}|\E|.
\end{equation}
Here and in what follows, we  simply write 
$E(x)$  for the
characteristic function of $E$, $\widehat{E}$ it's discrete Fourier transform which we will define it in Section \ref{sec:pre}. For $\xi\neq 0$, we mean that $\xi$ is a non-zero vector of $\F$. Applying the Plancherel identity, we have that for any $E\subset \F$ with $\# E\leq p^{d}/2$,
\begin{equation}
\sqrt{\# E/2}\leq \Phi(E)\leq \# E.
\end{equation}
See Babai \cite[Proposition 2.6]{Babai} for more details. The notation $\# E$ stands for the cardinality of a set $E$. Observe  that the optimal decay of $\E$ for all $\xi \neq 0$ are controlled by $O(\sqrt{\# E})$. We write $X=O( Y)$ means that  there is a positive constant $C$ such that $X\leq CY$, and $X=\Theta(Y)$ if $X=O(Y)$ and $Y=O(X)$. Isoevich and Rudnev \cite{IsoevichRudnev} called these sets Salem sets. To be precise we show the definition here.
\begin{definition}\label{def:Salem sets}\cite{IsoevichRudnev} A subset $E\subset \F$ is called a Salem set if for all non-zero $\xi$ of $\F$,
\begin{equation}
\m\widehat{E}(\xi)\m=O(\sqrt{\# E}).
\end{equation}
\end{definition}

Note that this is a finite fields version of Salem sets in Euclidean spaces. Roughly speaking, a set in Euclidean space is called a Salem set if there exist measures on this set, and  the Fourier transform of these measures have  optimal decay, see \cite{Bluhm},  \cite[Chapter 3]{Mattila} for more details on Salem sets in Euclidean spaces. 

It is well known that the sets for which all the non-zero Fourier coefficient  are small play an important role, e.g.,  see \cite{Babai}, \cite{Mattila} and \cite{TaoVu}. For some applications of  Salem sets in vector spaces over finite fields, see \cite{Iosevich}, \cite{IsoevichRudnev}, 
 \cite{Koh}.




In \cite[Theorem 1.13]{Hayes} Hayes proved  that almost all $m$-subset of $\F$ are (weak) Salem sets which answer a question of Babai. To be precise, let $E=E^{\omega}$ be selected uniformly at random from the collection of all subsets of $\F$ which have $m$ vectors. Let $\Omega(\F, m)$ denotes the probability space.

\begin{theorem}[Hayes] \label{thm:Hayes} Let $\varepsilon>0$. Let $m\leq p^{d}/2$. For all but an $O(p^{-d\varepsilon})$ probability  $E\in \Omega(\F, m)$, 
\begin{equation}
\Phi(E)< 2\sqrt{2(1+\varepsilon)m\log p^{d}}=O \left(\sqrt{m \log p^{d}} \right).
\end{equation}
\end{theorem}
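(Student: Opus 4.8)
The plan is to fix a single nonzero frequency $\xi$, establish a strong concentration bound for $\m\E\m$, and then take a union bound over the fewer than $p^d$ nonzero frequencies. First I would write, for the random $m$-set $E=E^{\omega}$,
\[
\E=\sum_{x\in E}\e ,
\]
and split it into real and imaginary parts, $\E=A_{\xi}+iB_{\xi}$, where $A_{\xi}=\sum_{x\in E}\cos(2\pi x\cdot\xi/p)$ and $B_{\xi}=-\sum_{x\in E}\sin(2\pi x\cdot\xi/p)$. Each of these is a sum, taken over a uniformly random $m$-subset of the $p^{d}$ points of $\F$, of bounded population values lying in $[-1,1]$. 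The key structural input is that for $\xi\neq 0$ the complete character sum vanishes, $\sum_{x\in\F}\e=0$, so the population means of the cosine and the sine values are both $0$; hence $\EE[A_{\xi}]=\EE[B_{\xi}]=0$.

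Next I would apply a Hoeffding-type tail bound. The only subtlety, and the step I expect to be the main obstacle, is that $\Omega(\F,m)$ samples \emph{without} replacement, so the summands are dependent. I would remove this obstruction using Hoeffding's observation that, for a convex function, the expectation under sampling without replacement is dominated by that under sampling with replacement; applied to $t\mapsto e^{\theta t}$ this shows the moment generating function of $A_{\xi}$ (resp.\ $B_{\xi}$) is bounded by the independent one, so the classical Chernoff/Hoeffding estimate survives unchanged. Alternatively one can expose the chosen points one at a time, obtaining a Doob martingale with bounded differences and invoke Azuma's inequality. Either way, since the summands lie in an interval of length at most $2$ and the mean is $0$, I obtain for every $t>0$
\[
\PP\bigl(\m A_{\xi}\m>t\bigr)\leq 2\exp\!\left(-\frac{t^{2}}{2m}\right),\qquad \PP\bigl(\m B_{\xi}\m>t\bigr)\leq 2\exp\!\left(-\frac{t^{2}}{2m}\right).
\]

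Finally I would calibrate the threshold. Setting $t=2\sqrt{(1+\varepsilon)m\log p^{d}}$ makes each tail at most $2p^{-2d(1+\varepsilon)}$, and on the complementary event $\m A_{\xi}\m,\m B_{\xi}\m\leq t$ one has $\m\E\m\leq t\sqrt{2}=2\sqrt{2(1+\varepsilon)m\log p^{d}}$. A union bound over the real and imaginary parts and over the fewer than $p^{d}$ nonzero $\xi$ then bounds the probability that $\Phi(E)$ exceeds this value by $4p^{d}\cdot p^{-2d(1+\varepsilon)}=4p^{-d(1+2\varepsilon)}=O(p^{-d\varepsilon})$, which is the desired conclusion. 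The hypothesis $m\leq p^{d}/2$ is not actually needed for the concentration itself; for $m>p^{d}/2$ one could instead pass to the complement, using $\E=-\widehat{E^{c}}(\xi)$ for $\xi\neq 0$.
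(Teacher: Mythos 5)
Your proposal is correct, but note that the paper does not actually prove this statement at all: Theorem \ref{thm:Hayes} is quoted from Hayes, whose proof goes through a large-deviation inequality for vector-valued martingales, and the paper explicitly remarks that this martingale argument does not transfer easily to the Bernoulli model $\Omega(\F,\delta)$ --- which is why the author develops the independent-summand Chernoff bound (Lemma \ref{lem:law of large numbers}) only for that model. Your route is the mirror image: you take exactly the scheme the paper uses for $\Omega(\F,\delta)$ --- real/imaginary decomposition, the zero-mean observation from $\sum_{x\in\F}e^{-2\pi i x\cdot\xi/p}=0$, a Chernoff/Hoeffding tail, and a union bound over the at most $p^{d}-1$ nonzero frequencies --- and transport it to the without-replacement model $\Omega(\F,m)$ via Hoeffding's convex-domination theorem (the moment generating function under sampling without replacement is at most the one under sampling with replacement, since $t\mapsto e^{\theta t}$ is convex). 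That step is the genuine content of your proof, and it works: with $t=2\sqrt{(1+\varepsilon)m\log p^{d}}$ each of $\PP(|A_{\xi}|>t)$ and $\PP(|B_{\xi}|>t)$ is at most $2p^{-2d(1+\varepsilon)}$, and the union bound gives $4p^{-d(1+2\varepsilon)}=O(p^{-d\varepsilon})$, even better than claimed. What your approach buys is an elementary, martingale-free proof of Hayes's theorem; what it shows structurally is that the dichotomy drawn in the paper (Chernoff for $\Omega(\F,\delta)$, martingales for $\Omega(\F,m)$) is not forced, since Hoeffding's exchangeability lemma bridges the two models.

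One genuine flaw in a side remark: your ``alternatively, Azuma'' fallback does \emph{not} deliver the stated constant. The Doob exposure martingale for sampling without replacement has differences bounded by $2$, so Azuma yields only $2\exp(-t^{2}/(8m))$, a factor $4$ loss in the exponent; with $t=2\sqrt{(1+\varepsilon)m\log p^{d}}$ the per-frequency tail is $2p^{-d(1+\varepsilon)/2}$, which is overwhelmed by the union bound over $p^{d}$ frequencies whenever $\varepsilon<1$. This is precisely why Hayes needed a sharper vector-valued martingale inequality rather than plain Azuma, so you should delete that sentence (or accept a larger threshold along that route); the convexity argument, your primary route, is the one that closes the proof. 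Your final observation that $m\leq p^{d}/2$ is not needed for the concentration itself, via $\widehat{E}(\xi)=-\widehat{E^{c}}(\xi)$ for $\xi\neq 0$, is correct; that hypothesis serves mainly to make the bound meaningful against the lower bound $\Phi(E)\geq\sqrt{\#E/2}$.
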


For convenience we call this kink of subset of $\F$ weak Salem set. 






\subsection{Percolation on $\FF_{p}^{d}$ }

There is a another random model which is closely related to the random model $\Omega(\F,m)$. First we show this random model in the following. Let $0<\delta<1$. We choose each point of $\F$ with probability $\delta$ and remove it with probability $1-\delta$, all choices being independent of each other. Let $E=E^{\omega}$ be the collection of these chosen points, and $\Omega=\Omega (\F, \delta)$ be the probability space.  Note that both random models $\Omega(\F, m)$ and $\Omega(\F, \delta)$  
are related to the well known Erd\"os-R\'enyi-Gilbert random graph models.

We note that Hayes \cite{Hayes} proved a  similar result to Theorem \ref{thm:Hayes} for the random model $\Omega(\F, 1/2)$. However, the martingale  argument for $\Omega(\F, 1/2)$ and $\Omega(\F,m)$ of \cite{Hayes} do not  apply easily to the random model $\Omega(\F, \delta)$ for other values of $\delta\neq 1/2$. Babai \cite[Theorem 5.2]{Babai} used the Chernoff bounds for the model $\Omega(\F, 1/2)$, but it seems that the method also can not be easily extended to general $\delta$. We note that Babai \cite{Babai}, Hayes \cite{Hayes} proved their results in general finite Abelian group, see  \cite{Babai}, \cite{Hayes} for more details. For the finite vector space $\F$ (special Abel group) we extend their result to general $\delta$. 

\begin{theorem}\label{thm:maintheorem}
Let $\varepsilon>0$. Let $\delta\in (0,1)$. For all but an $O(p^{-d\varepsilon})$ probability $E \in  \Omega(\F, \delta)$,
\begin{equation}
\Phi (E)<2\sqrt{(1+\varepsilon)\delta p^{d}\log p^{d}}=O\left(\sqrt{\delta p^{d} \log p^{d}} \right).
\end{equation}
\end{theorem}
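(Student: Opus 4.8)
The plan is to bound the probability that $|\widehat{E}(\xi)|$ is large for a single fixed non-zero $\xi$, and then take a union bound over all $p^d - 1$ non-zero frequencies. The key structural observation is that in the percolation model $\Omega(\F,\delta)$, the indicator variables $E(x)$ for $x \in \F$ are genuinely \emph{independent} (unlike in $\Omega(\F,m)$, where fixing the cardinality $m$ forces negative correlation). This independence is exactly what lets us avoid the martingale machinery of Hayes and instead apply a direct concentration inequality. So first I would write $\widehat{E}(\xi) = \sum_{x} E(x) e^{-2\pi i x\cdot\xi/p}$, note that for $\xi \neq 0$ the character values $\e$ sum to zero over $\F$, and hence $\EE[\widehat{E}(\xi)] = \delta \sum_x \e = 0$. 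Thus $\widehat{E}(\xi)$ is a sum of independent, mean-zero complex random variables.

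Next I would estimate the tail of $|\widehat{E}(\xi)|$. Writing $\widehat{E}(\xi) = \sum_x (E(x) - \delta)\,\e$ (using that the deterministic part vanishes), this is a sum of independent bounded complex random variables, each with variance $\Theta(\delta(1-\delta))$ in each real coordinate. The natural tool is a Hoeffding- or Chernoff-type bound applied separately to the real and imaginary parts of $\widehat{E}(\xi)$. Each of $\mathrm{Re}\,\widehat{E}(\xi)$ and $\mathrm{Im}\,\widehat{E}(\xi)$ is a sum over $x$ of independent terms $(E(x)-\delta)\cos(2\pi x\cdot\xi/p)$, respectively $-(E(x)-\delta)\sin(2\pi x\cdot\xi/p)$, each bounded by $1$ in absolute value, and the sum of squared ranges is $O(p^d)$. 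Hoeffding's inequality then gives, for a threshold $t$,
\begin{equation}
\PP\!\left(|\mathrm{Re}\,\widehat{E}(\xi)| \geq t\right) \leq 2\exp\!\left(-\frac{c\,t^2}{p^d}\right),
\end{equation}
and similarly for the imaginary part. Choosing $t$ of order $\sqrt{\delta p^d \log p^d}$ makes each such probability roughly $p^{-d(1+\varepsilon)}$ after fixing constants.

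To get the clean constant $2\sqrt{(1+\varepsilon)\delta p^d \log p^d}$ in the statement, I expect the sharper route is not plain Hoeffding but a Bernstein- or Chernoff-type bound that sees the true variance $\delta(1-\delta)$ rather than the crude range bound; the factor $\delta$ inside the square root in the theorem strongly suggests the variance proxy is $\sum_x \delta(1-\delta)\cos^2(\cdots) \approx \tfrac{1}{2}\delta p^d$, which is what produces the $\delta p^d$ scaling. So I would set up the moment generating function $\EE[\exp(\lambda(E(x)-\delta))]$ directly, expand, and use the standard inequality $\EE[e^{\lambda(E(x)-\delta)}] \leq \exp(\lambda^2/8)$ or a variance-sensitive refinement, then optimize over $\lambda$. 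Combining the real and imaginary tail bounds with a factor of $\sqrt 2$ (since $|\widehat{E}(\xi)| \leq \sqrt 2 \max(|\mathrm{Re}|, |\mathrm{Im}|)$, or by bounding $|\widehat{E}(\xi)| \geq t$ via the event that one part exceeds $t/\sqrt 2$) yields the claimed threshold.

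Finally I would take the union bound: the probability that \emph{some} non-zero $\xi$ violates the bound is at most $(p^d - 1)$ times the single-frequency tail probability, which is $p^d \cdot O(p^{-d(1+\varepsilon)}) = O(p^{-d\varepsilon})$, matching the statement. The main obstacle I anticipate is purely bookkeeping of constants: making Hoeffding/Chernoff produce exactly the constant $2$ and exactly the $\delta$ (not $\delta(1-\delta)$ or $\tfrac12$) inside the root, so that the union bound closes with precisely the stated $O(p^{-d\varepsilon})$ saving. The conceptual content — independence plus a concentration inequality plus a union bound — is straightforward; the care lies in tracking the variance factor and the real/imaginary decomposition so the leading constant comes out as advertised.
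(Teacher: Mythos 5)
Your proposal is correct and follows essentially the same route as the paper: the paper also fixes $\xi$, splits $\widehat{E}(\xi)$ into real and imaginary parts, applies a variance-sensitive Chernoff-type bound (via the moment generating function and $e^x \leq 1+x+x^2$ for $|x|\leq 1$) with the second-moment proxy $\mu_2 = \tfrac{1}{2}p^d\delta$ coming from $\sum_x \cos^2(2\pi x\cdot\xi/p) = \tfrac{1}{2}p^d$, chooses $\alpha = \sqrt{2(1+\varepsilon)p^d\delta\log p^d}$ and $\lambda = \alpha/(p^d\delta)$, combines the two parts with the $\sqrt{2}$ factor, and finishes with a union bound over the $p^d$ frequencies. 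The only cosmetic difference is that you center the variables as $E(x)-\delta$, whereas the paper works with the uncentered summands $E(x)\cos(2\pi x\cdot\xi/p)$ and uses $\mu_1 = 0$ directly; both yield the same variance scaling and the stated constant.
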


We know that almost all set $E\in \Omega(\F, \delta)$ has size roughly $\delta p^{d}$. This follows by  Chebyshev's inequality,  
\begin{equation}\label{eq:ee}
\PP(|\# E - p^{d}\delta|\geq  \frac{1}{2}p^{d}\delta)\leq \frac{4p^{d}\delta(1-\delta)}{(p^{d}\delta)^{2}}=O\left(\frac{1}{\delta p^{d}}\right).
\end{equation}
 
We immediately  have the following corollary, which says that almost all $E\in \Omega(\F, \delta)$ is a weak Salem set. 

\begin{corollary}
Let $\varepsilon>0$. Let $\delta\in (0,1)$. For all but an $O(\max\{p^{-d\varepsilon}, \frac{1}{\delta p^{d}}\})$ probability $E \in \Omega(\F,\delta)$,
\begin{equation}
|\widehat{E}(\xi)| = O\left(\sqrt{\#E \log p^{d}}\right). 
\end{equation}
\end{corollary}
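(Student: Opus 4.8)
The plan is to deduce the Corollary directly from Theorem \ref{thm:maintheorem} together with the cardinality concentration estimate \eqref{eq:ee}, combining their two exceptional events by a single union bound; no fresh probabilistic estimate is required. Before starting I would note that, as in Definition \ref{def:Salem sets} and the definition of $\Phi$, the assertion is about the nonzero frequencies: for every $\xi\neq 0$ one has $|\widehat{E}(\xi)|\leq \Phi(E)$, and it is this quantity that the Corollary controls.

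First I would isolate the two relevant bad events. Let $A$ be the event that $\Phi(E)\geq 2\sqrt{(1+\varepsilon)\delta p^{d}\log p^{d}}$, and let $B$ be the event that $\# E<\frac{1}{2}\delta p^{d}$. Theorem \ref{thm:maintheorem} gives $\PP(A)=O(p^{-d\varepsilon})$, and since $B$ is contained in the event $\{\,|\# E-\delta p^{d}|\geq \frac{1}{2}\delta p^{d}\,\}$, estimate \eqref{eq:ee} gives $\PP(B)=O(1/(\delta p^{d}))$. A union bound then yields
\[
\PP(A\cup B)\leq \PP(A)+\PP(B)=O\!\left(\max\Big\{p^{-d\varepsilon},\tfrac{1}{\delta p^{d}}\Big\}\right),
\]
which is precisely the exceptional probability stated in the Corollary.

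On the complementary event $A^{c}\cap B^{c}$ both estimates hold at once, namely $\Phi(E)<2\sqrt{(1+\varepsilon)\delta p^{d}\log p^{d}}$ and $\delta p^{d}\leq 2\,\# E$. Feeding the second inequality into the first, for every $\xi\neq 0$ I would write
\[
|\widehat{E}(\xi)|\leq \Phi(E)<2\sqrt{(1+\varepsilon)\delta p^{d}\log p^{d}}\leq 2\sqrt{2(1+\varepsilon)\,\# E\,\log p^{d}}=O\!\left(\sqrt{\# E\,\log p^{d}}\right),
\]
which is the desired conclusion. The argument is routine, since the substantive work is already contained in Theorem \ref{thm:maintheorem}; the only point needing a little care is the bookkeeping of the two failure mechanisms, namely observing that $p^{-d\varepsilon}+1/(\delta p^{d})=O(\max\{p^{-d\varepsilon},1/(\delta p^{d})\})$ so that they collapse into the single advertised rate, and keeping the statement restricted to $\xi\neq 0$ so that the trivial frequency $\widehat{E}(0)=\# E$ is excluded. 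I do not anticipate any genuine obstacle.
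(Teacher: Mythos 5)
Your proposal is correct and is exactly the argument the paper intends: the corollary is stated as following ``immediately'' from Theorem \ref{thm:maintheorem} together with the Chebyshev estimate \eqref{eq:ee}, which is precisely your union bound over the two bad events followed by substituting $\delta p^{d}\leq 2\,\# E$ into the bound on $\Phi(E)$. The bookkeeping points you flag (collapsing the two failure probabilities into the stated maximum, and restricting to $\xi\neq 0$) are handled the same way, so there is nothing to add.
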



In $\F$, it seems that  the only known examples of Salem sets are  discrete paraboloid and  discrete sphere. We note that both the size of the  discrete paraboloid and the discrete sphere are roughly $p^{d-1}$,  see \cite{IsoevichRudnev} for more details.  It is natural to ask that does there exists Salem set with any given size $m\leq p^{n}$. The above results and \cite[Problem 20]{Mattila2004} suggest  the following conjecture.

\begin{conjecture}
Let $s\in (0,d)$ be a non-integer and $C$ be a positive constant. Then  
\[
\min_{E}\frac{\Phi (E)}{\sqrt{\# E}} \rightarrow \infty \text{ as } p \rightarrow \infty,
\]  
where the minimal taking over all subsets $E \subset  \F$ with $p^{s}/C\leq \# E \leq   C p^{s}$. 
\end{conjecture}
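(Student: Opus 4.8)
As the final statement is a conjecture rather than a proven result, I propose a program that I believe could establish it, and I flag the step I expect to be the genuine obstruction. The plan is to argue by contraposition and quantitatively: suppose that for infinitely many primes $p$ there is a set $E=E_{p}\subset\F$ with $p^{s}/C\le \#E\le Cp^{s}$ and $\Phi(E)\le C_{0}\sqrt{\#E}$ for a fixed constant $C_{0}$, and derive that $s$ must then be an integer. Writing $N=\#E$, the first inputs are the moment identities $\sum_{\xi}\m\widehat E(\xi)\m^{2k}=p^{d}T_{k}(E)$, where $T_{k}(E)$ counts the additive $2k$-tuples of $E$. Combined with the hypothesis $\max_{\xi\neq0}\m\widehat E(\xi)\m^{2}\le C_{0}^{2}N$ and Plancherel, these force, for every fixed $k$, the energy bound $T_{k}(E)=O(\max\{N^{2k}/p^{d},\,N^{k}\})$, which matches the trivial lower bound $T_{k}(E)\ge\max\{N^{k},\,N^{2k}/p^{d}\}$. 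In particular, when $s<d/2$ the set is forced to be nearly a Sidon set, and in general the whole profile of the spectrum $\{\m\widehat E(\xi)\m\}_{\xi\neq0}$ is pinned down up to constants.

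The second, and decisive, input is the rigidity of the extremal examples. For the paraboloid and the sphere the spectrum is essentially two-valued, $\m\widehat E(\xi)\m^{2}\in\{0,\Theta(N)\}$, and a set with an exactly two-valued Fourier spectrum is equivalent to a partial difference set, i.e.\ a strongly regular Cayley graph on the additive group of $\F$. For such objects the standard eigenvalue-integrality constraints (the integrality of the multiplicities of the associated two-class association scheme, or the Delsarte linear-programming bound) impose Diophantine relations on the parameters that are satisfiable only when $N$ is of the form $p^{k}+O(p^{k/2})$ with $k\in\ZZ$. The strategy is therefore to show that a set which is Salem with a bounded constant must have an approximately two-valued spectrum, and hence inherits these integrality constraints, yielding $\log_{p}N\to k\in\ZZ$.

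The hard part is precisely the passage from a bounded spectrum to an approximately two-valued one, and from approximate two-valuedness to exact integrality. The moment hierarchy alone cannot detect non-integrality: for each $k$ the upper and lower bounds for $T_{k}(E)$ coincide up to the constant $C_{0}^{2(k-1)}$, so no single $L^{2k}$ norm produces a contradiction, and one must instead control the full distribution of the spectral values simultaneously. Equivalently, one needs a stability theorem asserting that near-extremal Salem sets are quantitatively close to partial difference sets, together with a robust form of the eigenvalue-integrality obstruction that survives such perturbations, a statement that, unlike its exact counterpart, is not available in the association-scheme literature. I would first attempt the two extreme regimes as test cases: $s$ close to $0$ (small Sidon sets, where the near-Sidon conclusion is strongest and one can try to rule out $\Phi(E)=O(\sqrt{N})$ directly from the $T_{2}$ and $T_{3}$ counts together with pigeonholing of the large spectral values), and $s$ close to an integer $k$, where one would quantify how the Diophantine defect of $s$ forces a lower bound on $\Phi(E)/\sqrt{N}$ diverging as $p\to\infty$. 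Establishing either endpoint rigorously would already be a substantial step toward the full conjecture.
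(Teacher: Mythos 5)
The statement you were asked about is a \emph{conjecture}: the paper offers no proof of it, so there is nothing to compare your argument against, and your proposal must be judged on its own terms. You were right to present it as a program rather than a proof, and parts of it are sound: the moment identity $\sum_{\xi}|\widehat{E}(\xi)|^{2k}=p^{d}T_{k}(E)$ is correct, and combining $\Phi(E)\leq C_{0}\sqrt{N}$ with Plancherel does give $T_{k}(E)\leq N^{2k}/p^{d}+C_{0}^{2(k-1)}N^{k}$, matching the trivial lower bound up to constants; the observation that for $s<d/2$ a bounded-constant Salem set is forced to be nearly Sidon is also correct, and the suggestion to attack the endpoint regimes first is sensible.

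The genuine gap is the rigidity input, and it is worse than you flagged: it rests on a premise that is already false for one of the two known examples. The non-zero Fourier coefficients of the discrete sphere are (up to normalization) Kloosterman sums, whose normalized values equidistribute in $[-2,2]$ with respect to the Sato--Tate measure by Katz's theorem; the sphere's spectrum is therefore genuinely spread over an interval of scale $\sqrt{N}$, not two-valued, so ``extremal Salem set $\Rightarrow$ approximately partial difference set'' fails at the base case, not merely at the stability step. (The paraboloid, whose non-zero coefficients are complete Gauss sums of a single magnitude, is the exception rather than the rule.) Moreover, approximate two-valuedness cannot follow from a bounded Salem constant even in principle: the moment constraints pin down each $T_{k}$ only up to the factor $C_{0}^{2(k-1)}$, which is compatible with spectral values spread throughout $[c\sqrt{N},C_{0}\sqrt{N}]$, and the Salem property is stable under perturbations --- adjoining $O(\sqrt{N})$ arbitrary points changes every $|\widehat{E}(\xi)|$ by at most $O(\sqrt{N})$ while destroying any exact combinatorial structure --- so near-extremal sets are provably \emph{not} close to partial difference sets in the sense your integrality argument would need. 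Any viable version of your program would have to replace the association-scheme integrality obstruction with an invariant that is robust under such perturbations, and no such tool currently exists; this is why the statement remains a conjecture.
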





\section{Preliminaries}\label{sec:pre}

In this section we show the definition of the finite field Fourier transform, and some easy facts about the random model $\Omega(\F, \delta)$.  Let $f : \F\longrightarrow \mathbb{C}$ be a complex value function. Then for $\xi \in \F$ we define the Fourier transform 
\begin{equation}
\widehat{f}(\xi)=\sum_{x\in \F} f(x)\e,
\end{equation}  
where the inter product $x \cdot \xi$ is defined as $ x_1\xi_1+\cdots +x_p\xi_p$. 
Recall the following Plancherel identity, 
\begin{equation*}
 \sum_{\xi \in \F}|\widehat{f}(\xi)|^{2}=p^{d}\sum_{x\in \F} |f(x)|^{2}.
\end{equation*} 
Specially for the subset of $E\subset \F$, we have 
\begin{equation}
\sum_{\xi \in \F} \m\E\m^{2}=p^{d}\# E.
\end{equation}
For more details on discrete Fourier analysis, see  Stein and Shakarchi \cite{Stein}.

We show some easy facts about the random model $\Omega(\F, \delta)$ in the following. Let $\xi\neq 0$, then the expectation of $\E$ is  
\begin{equation*}
\EE(\widehat{E}(\xi))=\delta \sum_{x \in \F}\e=0.
\end{equation*}
Since 
\begin{equation*}
\begin{aligned}
\m \widehat{E}(\xi)\m^{2}&=\sum_{x,y \in \F}E(x)E(y)e^{-\frac{2 \pi i (x-y)\cdot \xi}{p}}\\
&=\sum_{x \in \F}E(x)+\sum_{x\neq y \in \F}E(x)E(y)e^{-\frac{2 \pi i (x-y)\cdot \xi}{p}},
\end{aligned}
\end{equation*}
we have 
\begin{equation*}
\begin{aligned}
\EE \left(\m \widehat{E}(\xi)\m^{2} \right)&= \delta p^{d}+ \delta^{2}\sum_{x\neq y \in \F}e^{-\frac{2 \pi i (x-y)\cdot \xi}{p}}\\
&=p^{d}\delta \left(1-\delta \right).
\end{aligned}
\end{equation*}
We may read this identity as (for small $\delta$) 
\[
\m\E\m =\Theta \left( \sqrt{p^{d}\delta} \right) = \Theta \left( \sqrt{\# E} \right).  
\]

\section{Proof of Theorem \ref{thm:maintheorem}}

For the convenience to our use, we formulate a special large deviations estimate in the following. For more background and details on large deviations estimate, see Alon and Spencer \cite[Appendix A]{Alon}.

\begin{lemma}\label{lem:law of large numbers}
Let $\{X_j\}_{j=1}^N$ be a sequence  independent random variables with $ \vert X_j \vert \leq 1$,  $\mu_1:= \sum_{j=1}^{N}\EE(X_i)$, and $\mu_{2}:=\sum_{j=1}^{N}\EE(X_j^{2})$. Then for any $\alpha>0$, $0<\lambda<1$, 
\begin{equation}
\PP( \big\m \sum^N_{j=1} X_j \big \m \geq  \alpha )\leq e^{-\lambda \alpha+\lambda^{2}\mu_{2}}(e^{\lambda \mu _{1}}+e^{-\lambda \mu _{1}}).
\end{equation} 
\end{lemma}
\begin{proof}
Applying Markov's inequality to the random variable $e^{\lambda \sum^N_{j=1} X_j}$. This gives 
\begin{equation}\label{eq:markov}
\begin{aligned}
\PP( \sum^N_{j=1} X_j \geq \alpha)&=\PP (e^{\lambda \sum^N_{j=1} X_j}  > e^{\lambda \alpha})\\
& \leq  e^{-\lambda \alpha} \EE(e^{\lambda \sum^N_{j=1} X_j})\\
&=e^{\lambda \alpha} \prod^N_{j=1} \EE( e^{ \lambda X_j}),
\end{aligned}
\end{equation}
the last equality holds since $\{X_j\}_j$ is a sequence independent random variables. 

For any  $\m x \m \leq 1$ we have  
 \[
 e^{x} \leq 1+x +x^{2}.
 \] 
Since $\m \lambda  X_j \m \leq 1$, we have 
 \[
 e^{\lambda X_j}\leq 1 + \lambda X_j +\lambda^{2} X_j^{2}, 
 \]
 and hence 
\begin{equation*}
\begin{aligned}
\EE( e^{\lambda X_j}) &\leq 1+ \EE(\lambda X_i)+\EE (\lambda^{2} X_j^{2})\\
&\leq e^{\EE(\lambda X_i)+\EE (\lambda^{2} X_j^{2})}.
\end{aligned}
\end{equation*}
Combining this with \eqref{eq:markov}, we have
\[
\PP( \sum^N_{j=1} X_j \geq \alpha)\leq e^{-\lambda \alpha + \lambda \mu _{1} + \lambda^{2}\mu _{2}}.
\]

Applying the similar way to the above for  $\PP(\sum_{j=1}^{N} X_{j} \geq -\alpha)$, we obtain
\[
\PP( -\sum^N_{j=1} X_j \geq \alpha)\leq e^{-\lambda \alpha - \lambda \mu _{1} + \lambda^{2}\mu _{2}}.
\]
Thus we finish the proof.
\end{proof}

The following two easy identities are also useful for us. 
\begin{equation}\label{eq:identities}
\begin{aligned}
&\sum_{x\in \F} \cos\frac{2 \pi x \cdot\xi}{p}=Re \left(\sum_{x\in \F}\e \right)=0\\
&\sum_{x\in \F} \cos^{2}\frac{2 \pi x \cdot\xi}{p}=\sum_{x\in \F}\frac{1+\cos\frac{4 \pi x \cdot\xi}{p}}{2} =\frac{1}{2}p^{d}
\end{aligned}
\end{equation}

\begin{proof}[Proof of Theorem \ref{thm:maintheorem}]
Let $\xi\neq 0$ and $E\in \Omega(\F, \delta)$.  Let 
\[
\widehat{E}(\xi)=\sum_{x\in \F} E(x)\e =\mathcal{R} +i \mathcal{I}
\] 
where $\R$ and is  the real part  of $\widehat{E}(\xi)$,  and $\I$ is the imagine part of $\widehat{E}(\xi)$. First we provide the estimate to the real part $\R$. By the Euler identity, we have
\[
\mathcal{R}=\sum_{x\in \F}E(x)\cos(\frac{2\pi x\cdot \xi}{p}).
\]
Note that 
\[
E(x)\cos\left(\frac{2\pi x\cdot \xi}{p}\right), x\in \F
\] is a sequence of independent random variables. Furthermore, applying the identities \eqref{eq:identities} , we have  
 
\begin{equation}\label{eq:mu}
\mu _{1}=0, \,\, \mu _{2}=\frac{1}{2}p^{d}\delta.
\end{equation}
Here $\mu _{1}, \mu _{2}$ are defined as the same way as in the Lemma \ref{lem:law of large numbers}. Let 
\begin{equation}\label{eq:alpha}
\alpha :=\sqrt{2(1+\varepsilon) p^{d}\delta \log p^{d}},\,  \, \lambda:=\frac{\alpha}{p^{d}\delta}.
\end{equation}

Note that $\lambda\leq 1$  for large $p$. Applying Lemma \ref{lem:law of large numbers}, we have 
\begin{equation}\label{eq:r}
\begin{aligned}
\PP(\m\R\m \geq  \alpha)&\leq 2 e^{-\lambda \alpha +\lambda^{2} \mu _{2}}\\
&=2e^{-\frac{\alpha^{2}}{2p^{d}\delta}}=\frac{2}{p^{d(1+\varepsilon)}}.
\end{aligned}
\end{equation}

Now we turn to the  imagine part $\I$. Applying the similar argument to the real part $\R$, note that the identities \eqref{eq:identities} also hold if we take $\sin$ instead of $\cos$, we obtain 
\[
\PP(|\I | \geq \alpha)\leq \frac{2}{p^{d(1+\varepsilon)}}.
\]
Combining this with the estimate \eqref{eq:r}, we obtain
\begin{equation}
\PP( |\widehat{E}(\xi)| \geq  \sqrt{2}\alpha)\leq \PP(\m\R\m \geq  \alpha)+\PP(|\I | \geq \alpha) \leq  \frac{4}{p^{d(1+\varepsilon)}}
\end{equation}

Observe that the above argument  works to any non-zero vector $\xi$. Therefore, we obtain 
\begin{equation}
\PP( \exists \, \xi \neq 0, \text { s.t }  |\widehat{E}(\xi)| \geq \sqrt{2}\alpha)\leq  \frac{4}{p^{d\varepsilon}}.
\end{equation}
Recall the value of $\alpha$ in \eqref{eq:alpha}, 
\[
\alpha =\sqrt{2(1+\varepsilon) p^{d}\delta \log p^{d}},
\]
this completes the proof.
\end{proof}

\medskip
\textbf{Acknowledgements.} I am grateful  for being  supported by the Vilho, Yrj\"o, and Kalle V\"ais\"al\"a foundation.


 





\end{document}